%%%%%%%%%%%%%%%%%%%%%%%%%%%%%%%%%%%%%%%%%%%%%%%%%%%%%%%%%%%%%%%%%%%%%%%%%
% Sample tex file for Illinois Journal of Mathematics
%%%%%%%%%%%%%%%%%%%%%%%%%%%%%%%%%%%%%%%%%%%%%%%%%%%%%%%%%%%%%%%%%%%%%%%%%

%%%%%%%%%%%%%%%%%%%%%%%%%%%%%%%%%%%%%%%%%%%%%%%%%%%%%%%%%%%%%%%%%%%%%%%%%
%%% File sample-masa
%%%%%%%%%%%%%%%%%%%%%%%%%%%%%%%%%%%%%%%%%%%%%%%%%%%%%%%%%%%%%%%%%%%%%%%%%

%%%%%%%%%%%%%%%%%%%%%%%%%%%%%%%%%%%%%%%%%%%%%%%%%%%%%%%%%%%%%%%%%%%%%%%%%
%%% Topmatter
%%%%%%%%%%%%%%%%%%%%%%%%%%%%%%%%%%%%%%%%%%%%%%%%%%%%%%%%%%%%%%%%%%%%%%%%%

\documentclass[draft]{amsart}

\setlength{\textheight}{43pc}
\setlength{\textwidth}{28pc}

%% packages

\usepackage{amssymb}
\usepackage{url}
\usepackage{amscd}

%%%%%%%%%%%%%%%%%%%%%%%%%%%%%%%%%%%%%%%%%%%%%%%%%%%%%%%%%%%%%%%%%%%%%%%%%
%%% Theorem declarations 
%%%%%%%%%%%%%%%%%%%%%%%%%%%%%%%%%%%%%%%%%%%%%%%%%%%%%%%%%%%%%%%%%%%%%%%%%

\usepackage[all]{xy}
%%%%%%%%%%%%%%%%%%%%%%%

\def \N{{\mathbb N}}

\def \R{{\mathbb R}}

\def \1{{\mathbb 1}}

\theoremstyle{plain}
\newtheorem{theorem}{Theorem}
\newtheorem{proposition}{Proposition}
\newtheorem{definition}{Definition}
\newtheorem{lemma}{Lemma} 
\newtheorem{corollary}{Corollary}

\theoremstyle{remark}

%\newtheorem{remark}{Remark}
%\newtheorem{example}{Example}

%%%%%%%%%%%%%%%%%%%%%%%%%%%%%%%%%%%%%%%%%%%%%%%%%%%%%%%%%%%%%%%%%%%%%%%%%
%%%  Topmatter 
%%%%%%%%%%%%%%%%%%%%%%%%%%%%%%%%%%%%%%%%%%%%%%%%%%%%%%%%%%%%%%%%%%%%%%%%%

\begin{document}

%\date{Received , 2016; received in final form , 2017} 

\title[On the Bauer's maximum principle]{Extension of the Bauer's maximum principle for compact metrizable sets.}

\author{Mohammed Bachir}

\address{Laboratoire SAMM 4543, Universit\'e Paris 1 Panth\'eon-Sorbonne\\
Centre P.M.F. 90 rue Tolbiac\\
75634 Paris cedex 13\\
France}

\email{Mohammed.Bachir@univ-paris1.fr}

%\thanks{Partially supported by grant DGICYT PB98-0618}

\subjclass{46B22, 46B20, 30C80}

\begin{abstract}
Let $X$ be a nonempty convex compact subset of some Hausdorff locally convex topological vector space $S$. The well know Bauer's maximum principle stats that every convex upper semi-continuous function from $X$ into $\R$ attains its maximum at some extremal point of $X$. We give some extensions of this result when $X$ is assumed to be compact metrizable. We prove that the set of all convex upper semi-continuous functions attaining there maximum at exactly one extremal point of $X$ is a $G_\delta$ dense subset of the space of all convex upper semi-continuous functions equipped with a metric compatible with the uniform convergence.
%Let $X$ be a nonempty convex compact subset of some Hausdorff locally convex topological vector space $S$. The well know Bauer's maximum principle stats that every convex upper semi-continuous functions from $X$ into $\R$ attains its maximum at some extremal point of $X$. We prove that, if moreover $X$ is metrizable, then: $(1)$ Given any nonempty familly $(f_i)_{i\in I}$ of convex upper semi-continuous functions, there exists a commun extremal point $e$ at which $f_i$ attains its maximum on $X$ for every $i\in I$, whenever there exists a commun point $x\in X$ at which $f_i$ attains its maximum on $X$ for every $i\in I$. We recover Bauer's theorem (in the metrizable framework) by taking $I$ a singleton. $(2)$ The set of all convex upper semi-continuous functions attaining there maximum at exactly one extremal point of $X$ is a $G_\delta$ dense subset of the space of all convex upper semi-continuous functions equipped with a metric compatible with the uniform convergence. 
%
\end{abstract}

\maketitle

%%%%%%%%%%%%%%%%%%%%%%%%%%%%%%%%%%%%%%%%%%%%%%%%%%%%%%%%%%%%%%%%%%%%%%%%%
% Macros
%%%%%%%%%%%%%%%%%%%%%%%%%%%%%%%%%%%%%%%%%%%%%%%%%%%%%%%%%%%%%%%%%%%%%%%%%

\newcommand\sfrac[2]{{#1/#2}}

\newcommand\cont{\operatorname{cont}}
\newcommand\diff{\operatorname{diff}}

%%%%%%%%%%%%%%%%%%%%%%%%%%%%%%%%%%%%%%%%%%%%%%%%%%%%%%%%%%%%%%%%%%%%%%%%%
{\bf Keywords and phrases:} Bauer's Maximum Principle, Variational Principle, Exposed and Extremal points, Convexity and $\Phi$-convexity.

\section{Introduction}

Let $X$ be a nonempty convex compact subset of some Hausdorff locally convex topological vector space $S$. By $\textnormal{Aff}(X)$ we denote the space of all affine (i.e. $\varphi(\lambda x+(1-\lambda)y)=\lambda \varphi(x)+(1-\lambda)\varphi(y)$ for all $x, y\in X$ and $\lambda\in [0,1]$), continuous functions from $X$ into $\R$. 
Let $K$ be a nonempty subset of $X$ (not necessarily convex), a point $x\in K$ is said to be an affine exposed point of $K$, if there exists $\varphi\in \textnormal{Aff}(X)$ such that $\varphi$ attains its unique maximum at $x$. The set of all affine exposed points of $K$ is denoted by $\textnormal{AExp}(K)$. It is easy to see that $\textnormal{AExp}(K)\subset \textnormal{Ext}(K)$, where $\textnormal{Ext}(K)$ is the set of all extremal points of $K$, but this inclusion is strict in general (see \cite{Ba} for more details about affine exposed points). Recall that a point $x$ of a nonempty subset $C$ of $S$ is extremal in $C$, if and only if :
%$x$ is not the inner point of any line segment in $C$. Equivalently, if and only if the following implication holds:
$y, z \in C, 0<\alpha <1 ;\hspace{2mm} x = \alpha y+(1-\alpha)z \Longrightarrow x = y = z.$
\vskip2mm
If $K$ is a non empty compact subset of $X$ and $f: K\to \R$ is upper semi-continuous, by $K_{\max}(f)$, we denote the following non empty closed subset of $K$:
$$\emptyset\neq K_{\max}(f):=\lbrace x\in K: f(x)=\max_K f\rbrace.$$
By $\Sigma(X)$ we denote the space of all convex upper semi-continuous functions from $X$ into $\R$. Let $\mathcal{B}$ be a subset of $\Sigma(X)$. We say that $\mathcal{B}$ is $\textnormal{Aff}(X)$-stable if and only if  
$$(f, \varphi)\in \mathcal{B}\times  \textnormal{Aff}(X)\Longrightarrow f+\varphi \in\mathcal{B}.$$ 
The space $\Sigma(X)$ of all convex upper semi-continuous (resp. the space of all convex continuous, the space of all affine continuous) functions from $X$ into $\R$ is $\textnormal{Aff}(X)$-stable.  These spaces are equipped with the following metric:
$$\rho_{\infty}(f,g):=\sup_{x\in X} \frac{|f(x)-g(x)|}{1+|f(x)-g(x)|}.$$
%Note, however, that the set of all strictly quasi-convex upper semi-continuous functions is not $\textnormal{Aff}(X)$-stable. 
Recall that a function $f$ is said to be strictly quasi-convex on a convex set $X$ if for all $y, z\in X$ and all $\lambda \in ]0,1[$, we have $f(\lambda y+(1-\lambda)z)< \max(f(y),f(z))$.
\vskip5mm
We stat now the Bauer's maximum principle which applies for convex compact not necessarily metrizable subsets of a Hausdorff locally convex topological vector space. The classical proof of Bauer's maximum principle is based on Zorn's lemma.
\begin{theorem} \label{Bau} \textnormal{(Bauer's maximum principle, \cite{Bau})} Let $S$ be a Hausdorff locally convex topological vector space and $X$ a nonempty convex compact subset of $S$. Let $f: X\to \R$ be a convex upper semi-continuous function. Then, $$X_{\max}(f)\cap \textnormal{Ext}(X)\neq \emptyset.$$
\end{theorem}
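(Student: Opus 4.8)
The plan is to follow the classical Zorn's lemma route, organized around the notion of an \emph{extremal subset} (face) of $X$: a nonempty closed set $F\subseteq X$ with the property that $x=\alpha y+(1-\alpha)z$ with $y,z\in X$, $0<\alpha<1$ and $x\in F$ forces $y,z\in F$. First I would record that $X_{\max}(f)$ is a nonempty compact extremal subset of $X$. It is nonempty and closed because $f$ is upper semi-continuous on the compact set $X$, so it attains its maximum $M:=\max_X f$. It is extremal because, for $x\in X_{\max}(f)$ with $x=\alpha y+(1-\alpha)z$ ($y,z\in X$, $0<\alpha<1$), convexity of $f$ gives $M=f(x)\le \alpha f(y)+(1-\alpha)f(z)\le \alpha M+(1-\alpha)M=M$, whence equality holds throughout and $f(y)=f(z)=M$, i.e. $y,z\in X_{\max}(f)$.

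Next I would apply Zorn's lemma to the family $\mathcal{F}$ of all nonempty closed extremal subsets of $X$ contained in $X_{\max}(f)$, ordered by reverse inclusion. A chain in $\mathcal{F}$ is a totally ordered family of nonempty compact sets, hence has the finite intersection property; by compactness of $X$ its intersection is nonempty, it is clearly closed, and an intersection of extremal subsets is again extremal. Thus every chain admits an upper bound (its intersection), and Zorn's lemma furnishes a minimal element $F^{*}$ of $\mathcal{F}$.

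Finally I would show that $F^{*}$ is a singleton; by the very definition of extremal subset its unique point is then an extreme point of $X$ lying in $X_{\max}(f)$, which proves the theorem. Suppose, for contradiction, that $F^{*}$ contains two distinct points $p\neq q$. Here the Hausdorff locally convex structure enters: by Hahn--Banach separation there is a continuous linear functional $\ell$ with $\ell(p)\neq\ell(q)$, so $\ell$ is nonconstant on $F^{*}$ and the set $F^{**}:=\lbrace x\in F^{*}: \ell(x)=\max_{F^{*}}\ell\rbrace$ is a nonempty closed \emph{proper} subset of $F^{*}$. Since $\ell$ is affine, the computation of the first step shows $F^{**}$ is an extremal subset of $F^{*}$; and because ``is an extremal subset of'' is transitive, $F^{**}$ is an extremal subset of $X$ contained in $X_{\max}(f)$, strictly smaller than $F^{*}$, contradicting minimality.

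I expect the main obstacle to lie in the Zorn argument rather than in the estimates: specifically, in verifying that the intersection of a chain stays nonempty (this is exactly the point where compactness is indispensable) and remains extremal, together with the transitivity of the face relation, which is the algebraic hinge that lets the separation step descend to a strictly smaller face and thereby close the contradiction.
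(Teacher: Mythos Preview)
Your argument is correct and is precisely the classical Zorn's-lemma proof that the paper alludes to when it says ``The classical proof of Bauer's maximum principle is based on Zorn's lemma.'' Note, however, that the paper does \emph{not} give its own proof of Theorem~\ref{Bau}; it merely cites it from \cite{Bau} as motivation. What the paper does instead is recover the conclusion in the compact \emph{metrizable} case by a genuinely different route (Theorem~\ref{Ba} and Corollary~\ref{Bauer} with $K=X$ and $I$ a singleton): rather than Zorn, it invokes the variational principle of Lemma~\ref{MBAD} to produce $\varphi\in\textnormal{Aff}(X)$ with a unique maximum on $X_{\max}(f)$, and then uses the $\Phi$-convexity of $f$ to argue that this unique point must be extremal in $X$.

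The trade-off is clean: your Zorn argument handles the full non-metrizable Hausdorff locally convex setting stated in Theorem~\ref{Bau}, whereas the paper's approach requires metrizability (so that Lemma~\ref{MBAD} applies) but avoids the axiom of choice and, as a bonus, yields the sharper genericity and common-extremal-point statements of parts~(2) and~(3) of Theorem~\ref{Ba}.
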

The aim of this note is to give with a simple proof and without using Zorn's lemma, some extensions of Bauer's maximum principle in the compact metrizable framework. Indeed, we prove that if $X$ is metrizable, then :

$(1)$ Let $\mathcal{B}$ be an $\textnormal{Aff}(X)$-stable subset of the space of all convex upper semi-continuous functions from $X$ into $\R$. Then, for any nonempty compact (not necessarily convex) subset $K$ of $X$ there exists a $G_\delta$ dense subset $\mathcal{G}$ of $(\mathcal{B},\rho_{\infty})$ such that for every function $f\in \mathcal{G}$ we have that $\textnormal{Ext}(K)\cap K_{\max}(f)$ is a singleton.

$(2)$ for every nonempty familly $(f_i)_{i\in I}$ ($I$ a nonempty set) of convex upper semi-continuous (resp. of strictly quasi-convex upper semi-continuous) functions on $X$ and every nonempty closed (not necessarily convex) subset $K$ of $X$, we have that 
\begin{eqnarray*}
\cap_{i\in I} K_{\max}(f_i)\neq \emptyset&\Longrightarrow&\textnormal{AExp}(\cap_{i\in I} K_{\max}(f_i))\cap \textnormal{Ext}(K) \neq \emptyset\\
&\Longrightarrow&\cap_{i\in I} K_{\max}(f_i)\cap \textnormal{Ext}(K) \neq \emptyset.
\end{eqnarray*}
In particular, there exists a commun extremal point $e\in\textnormal{Ext}(K)$ at which $f_i$ attains its maximum over $K$, for every $i\in I$. We recover immediately the classical Bauer's maximum principle (in the metrizable case) by taking $K=X$ and $I$ a singleton. As an immediat consequence we have: for every $x\in X$, let $\Omega_x$ be the following nonempty closed convex cone of $(\Sigma(X),\rho_{\infty})$ 
 %$$\Omega_x:=\lbrace f:X\to \R \textnormal{ convex upper semi-continuous}: f(x)=\max_X f \rbrace.$$ 
 $$\Omega_x:=\lbrace f\in \Sigma(X): f(x)=\max_X f \rbrace.$$
Then, there exists a commun extremal point $e\in \textnormal{Ext}(X)$ such that every function $f\in \Omega_x$ attains its maximum on $X$ at $e$.

 The main results of this note, are established in the more general context of $\Phi$-convexity (Theorem \ref{Ba}) in the sprit of the works by K. Fan \cite{Ky}, M. W. Grossman \cite{Gr} and B. D. Khanh \cite{Kh}. Note that every compact subset of Fr\'echet space or of metrizable Hausdorff locally convex topological vector space $S$ is of course metrizable, but the class of Hausdorff locally convex topological vector space $S$ in which every compact subset is metrizable, is more larger (see for instance the paper of B. Cascales and J. Orihuela in \cite{CO}). 
\vskip5mm
The proofs of our results are consequences of a new variational principle established recently in \cite{Ba} and does not use Zorn's lemma. Note that varitional principle \cite[Lemma 3.]{Ba} that we will use is similar to that of Deville-Godefroy-Zizler in \cite{DGZ} and Deville-Revalski in \cite{DR}, it applies to compact metrizable sets but the interest is that it does not use the existence of a bump function. This will allow us to work for example, with the space of affine continuous functions defined on convex compact metrizable set or the space of harmonic functions defined on open connexe set of $\R^n$, which has no bump functions.
%The proof of Bauer's maximum principle for general convex compact (not necessarily metrizable) sets, is based on Zorn's lemma. The proofs of our results are consequences of variational principle established recently in \cite{Ba} and does not use Zorn's lemma. Note that varitional principle \cite[Lemma 3.]{Ba} that we will use is similar to that of Deville-Godefroy-Zizler in \cite{DGZ}, it applies to compact metrizable sets but the interest is that it does not use the existence of a bump function. This will allow us to work for example in the space of affine continuous functions defined on convex compact metrizable set or the space of harmonic functions defined on open connexe set of $\R^n$, which has no bump functions. 

\section{Bauer's maximum principle.}
Let $S$ be any nonempty set, $\Phi$ a family of real valued functions on $S$. A subset $X\subset S$ is said to be $\Phi$-convex if $X=S$ or there exists a nonempty set $I$, such that $$X=\cap_{i\in I} \lbrace x\in S: \varphi_i(x)\leq \lambda_i\rbrace,$$
where $\varphi_i\in \Phi$ and $\lambda_i\in \R$ for all $i\in I$. For a nonempty set $A\subset S$, the intersection of all $\Phi$-convex subset of $S$ containing $A$ is said to be the $\Phi$-convex hull of $A$. By $\textnormal{conv}_\Phi(A)$, we denote the $\Phi$-convex hull of $A$.

\noindent Let $a, x, y \in S$, we say that $a$ is $\Phi$-between $x$ and $y$, if

$$(\varphi \in \Phi, \varphi(x)\leq\varphi(a), \varphi(y)\leq\varphi(a))\Longrightarrow (\varphi(a)=\varphi(x)=\varphi(y)).$$
Let $\emptyset\neq A \subset B\subset S$. The set $A$ is said to be $\Phi$-extremal subset of $B$, if 
$$(a\in A, a \textnormal{ is } \Phi\textnormal{-between the points } x,y\in B)\Longrightarrow (x\in A, y\in A).$$
If $A$ is a singleton $A=\lbrace a \rbrace$, we say that $a$ is $\Phi$-extremal point of $B$. The set of all $\Phi$-extremal points of a nonempty set $A$ will be denoted by $\Phi\textnormal{Ext}(A)$.
\vskip5mm
When $S$ is a Hausdorff locally convex topological vector space and $\Phi=S^*$ is the topological dual of $S$, then the $\Phi$-extremal points of a set coincides with the classical extremal points (see \cite[Proposition 2.]{Kh} and \cite{Ba})). 
%Recall that a point $x\in C$ ($C$ a nonempty subset of $S$) is said to be extremal in $C$ if and only if, $y, z\in C, 0<\alpha <1 ;\hspace{2mm} x = \alpha y+(1-\alpha)z \Longrightarrow x = y = z$. By $\textnormal{Ext}(C)$, we denote the set os all extremal points of $C$.

\begin{definition}\label{Def3} Let $S$ be a Hausdorff space, $C$ a subset of $S$ and $\Phi$ a family of real valued functions defined on $S$. We say that a point $x$ of $C$ is $\Phi$-exposed in $C$, and write $x\in \Phi\textnormal{Exp}(C)$, if there exists $\varphi\in \Phi$ such that $\varphi$ has a strict maximum on $C$ at $x$ i.e. $\varphi(x)>\varphi(y)$ for all $y\in C\setminus\lbrace x \rbrace$ (when C has at least two distinct points). Such $\varphi$ is then said to $\Phi$-expose $C$ at $x$. 
\end{definition} 
All the subsets considered in this article are assumed having at least two distinct points. The case of sets having only one point is trivial.  It is easy to see that $\Phi\textnormal{Exp}(C)\subset \Phi\textnormal{Ext}(C)$, but the converse is not true in general (see \cite{Ba} for more details).
\begin{definition} \textnormal{(see \cite{Kh})} Let $S$ be any nonempty set, $\Phi$ a familly of real-valued functions defined on $S$. Let $f$ be a real-valued function defined on $S$. We say that $f$ is $\Phi$-convex if and only if for every $a, x, y \in S$ such that $a$ is $\Phi$-between $x$ and $y$,
\begin{eqnarray*}
(f(x)\leq f(a) \textnormal{ and } f(y)\leq f(a))\Longrightarrow (f(x)=f(a)=f(y)). 
\end{eqnarray*}
\end{definition}
Let $S$ be a Hausdorff space and $\Phi$ a familly of real-valued functions defined on $S$. Let $K$ be a nonempty subset of $S$. By $\Phi\mathcal{C}(K)$ we denote the set of all real-valued $\Phi$-convex upper semi-continuous function on $K$, equipped with the following metric : $\forall f, g \in \Phi\mathcal{C}(K)$
$$\rho_{\infty}(f,g):=\sup_{x\in K} \frac{|f(x)-g(x)|}{1+|f(x)-g(x)|}.$$
The distance $\rho_{\infty}$ satisfies: for all $0< \varepsilon < 1$, 
$$\rho_{\infty}(f,g)\leq \varepsilon \Longleftrightarrow\sup_{x\in K}|f(x)-g(x)|\leq \frac{\varepsilon}{1-\varepsilon}.$$
\vskip5mm
A subspace $\mathcal{B}$ of $\Phi\mathcal{C}(K)$ is said to be $\Phi$-stable, if and only if 
$$(f, \varphi)\in \mathcal{B}\times \Phi \Longrightarrow f+\varphi \in\mathcal{B}.$$ 
Clearly, $\Phi$ is itself $\Phi$-stable if for example $\Phi$ is a vector space.
\vskip5mm
\subsection{Examples.} 
%\paragraph{\bf A) The classical convexity.}
Let $X$ be a nonempty convex compact subset of some Hausdorff locally convex topological vector space $S$ and let $K$ be a nonempty closed subset of $X$ (not necessarily convex). We have the following propositions with $(\Phi,\|.\|_\Phi)=(\textnormal{Aff}(X),\|.\|_{\infty})$.
%A function $\varphi$ is said to be affine on $X$ if $\varphi(\lambda x+(1-\lambda) y)=\lambda\varphi(x)+(1-\lambda)\varphi(y)$, for all $\lambda\in [0,1]$, all $x, y \in X$. By $(\textnormal{Aff}(X),\|.\|_{\infty})$ we denote the space of all real-valued affine continuous functions equipped with the sup-norm. 
\begin{proposition} \label{equiv} \textnormal{(see also \cite[Proposition 2.]{Kh})} %Let $X$ be a nonempty convex compact metrizable subset of some Hausdorff locally convex topological vector space $S$ and let $K$ be a nonempty closed subset of $X$ (not necessarilly convex). 
Let $x, y, z \in K$. Then, $x$ is $\textnormal{Aff}(X)$-between $y, z$ if and only if $x\in[y,z]$ (where $[y,z]$ denotes the segment in $X$). Consequentely, a point $x\in K$ is $\textnormal{Aff}(X)$-extremal in $K$ if and only if it is extremal in the classical sens.
\end{proposition}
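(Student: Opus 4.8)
The plan is to prove Proposition~\ref{equiv} by unwinding the definition of $\textnormal{Aff}(X)$-betweenness and exploiting the fact that the affine continuous functions on $X$ separate points (by Hahn--Banach, since $S$ is a Hausdorff locally convex space and every $\varphi\in S^*$ restricts to an element of $\textnormal{Aff}(X)$, while constants are affine too). First I would prove the easy implication: if $x\in[y,z]$, say $x=\lambda y+(1-\lambda)z$ with $\lambda\in[0,1]$, then for any $\varphi\in\textnormal{Aff}(X)$ affineness gives $\varphi(x)=\lambda\varphi(y)+(1-\lambda)\varphi(z)$, so $\varphi(x)$ is a convex combination of $\varphi(y)$ and $\varphi(z)$. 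Hence if $\varphi(y)\le\varphi(x)$ and $\varphi(z)\le\varphi(x)$, the only way a convex combination can dominate both of its endpoints is if all three values coincide; this is exactly the condition that $x$ is $\textnormal{Aff}(X)$-between $y$ and $z$.

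The converse is the direction where the separation property is essential, and I expect it to be the main obstacle. I would argue by contraposition: suppose $x\notin[y,z]$. The goal is to produce a single $\varphi\in\textnormal{Aff}(X)$ witnessing $\varphi(y)\le\varphi(x)$ and $\varphi(z)\le\varphi(x)$ but $\varphi(x)>\varphi(y)$ or $\varphi(x)>\varphi(z)$, contradicting $\textnormal{Aff}(X)$-betweenness. The natural approach is to note that $\{x\}$ and the (compact convex) segment $[y,z]$ are disjoint compact convex sets, so by the Hahn--Banach separation theorem there exists $\psi\in S^*$ and a real $c$ with $\psi(x)>c>\psi(w)$ for all $w\in[y,z]$; in particular $\psi(x)>\psi(y)$ and $\psi(x)>\psi(z)$. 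Restricting $\psi$ to $X$ gives the desired $\varphi\in\textnormal{Aff}(X)$, and strictness on at least one (indeed both) of $y,z$ violates the equality clause in the definition of $\textnormal{Aff}(X)$-between. A subtle point to handle carefully is the degenerate case where $x$ lies on the affine line through $y,z$ but outside the segment; the separation argument still applies since $[y,z]$ remains compact convex and $x$ is a distinct point, so one compact-convex/point separation covers all cases uniformly.

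Finally, the consequence about extremal points follows formally: a point $x\in K$ is $\textnormal{Aff}(X)$-extremal in $K$ precisely when $x$ being $\textnormal{Aff}(X)$-between $y,z\in K$ forces $y,z\in\{x\}$, equivalently $y=z=x$; by the equivalence just proved, $x$ is $\textnormal{Aff}(X)$-between $y,z$ iff $x\in[y,z]$, so the $\textnormal{Aff}(X)$-extremality condition reads $x\in[y,z]\Rightarrow y=z=x$ for $y,z\in K$, which is verbatim the classical definition of an extremal point of $K$ recalled in the introduction. I would remark that no convexity of $K$ is needed anywhere, only that $X$ itself is convex so that segments $[y,z]$ lie in $X$ and the affine functions are defined on them.
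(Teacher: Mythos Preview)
Your overall approach is exactly the paper's: the forward direction via the affine identity $\varphi(x)=\lambda\varphi(y)+(1-\lambda)\varphi(z)$, and the converse via Hahn--Banach separation of the singleton $\{x\}$ from the compact convex segment $[y,z]$.

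There is, however, a small but genuine slip in the forward direction (which the paper's own argument shares). Your claim that ``the only way a convex combination can dominate both of its endpoints is if all three values coincide'' breaks down when $\lambda\in\{0,1\}$: if $x=z\neq y$, pick any $\varphi\in\textnormal{Aff}(X)$ with $\varphi(y)<\varphi(x)=\varphi(z)$; then $\varphi(y)\le\varphi(x)$ and $\varphi(z)\le\varphi(x)$ hold while the three values do not coincide. This example in fact shows that $x=z$ is \emph{not} $\textnormal{Aff}(X)$-between $y,z$ when $y\neq z$, so the equivalence with the \emph{closed} segment $[y,z]$ is literally false; the correct characterization is $x$ is $\textnormal{Aff}(X)$-between $y,z$ iff $x=\lambda y+(1-\lambda)z$ for some $\lambda\in(0,1)$. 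This also affects your last paragraph: with the closed segment, the condition ``$x\in[y,z]\Rightarrow y=z=x$'' is \emph{not} the classical definition (no point of a set $K$ with at least two elements satisfies it, since one may take $y=x$ and $z\in K\setminus\{x\}$). With the open segment the condition becomes exactly ``$0<\alpha<1,\ x=\alpha y+(1-\alpha)z\Rightarrow y=z=x$'', which is the classical extremality recalled in the introduction, and the consequence then follows as you indicate.
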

\begin{proof} Suppose that $x\in[y,z]$, then there exists $\lambda \in [0,1]$ such that $x=\lambda y + (1-\lambda) z$. Let $\varphi\in \textnormal{Aff}(X)$ and suppose that $\varphi(y)\leq \varphi(x)$ and $\varphi(z)\leq \varphi(x)$. Then, $\varphi(y)= \varphi(x)=\varphi(z)$ and so, $x$ is $\textnormal{Aff}(X)$-between $y, z$. Indeed, suppose by contradiction that $\varphi(y)\neq \varphi(x)$ or $\varphi(z)\neq \varphi(x)$. Thus, $\varphi(y)<\varphi(x)$ or $\varphi(z)< \varphi(x)$. Since $\varphi$ is affine, 
\begin{eqnarray*}
\varphi(x)=\varphi(\lambda y + (1-\lambda) z)&=&\lambda \varphi(y)+(1-\lambda)\varphi(z)\\
                                             &<& \lambda \varphi(x)+(1-\lambda)\varphi(x)\\
                                             &=& \varphi(x),
\end{eqnarray*}
which is a contradiction. To see the converse, suppose that $x$ is $\textnormal{Aff}(X)$-between $y, z$. We need to prove that $x\in[y,z]$. Suppose by contradiction that $x\not\in[y,z]$. By the separation theorem, there exists $x^*$ in the topological dual of $S$ (in particular $x^*\in \textnormal{Aff}(X)$), such that $x^*(x)> x^*(\lambda y + (1-\lambda) z)$ for all $\lambda \in [0,1]$. In particular, $x^*(y)< x^*(x)$ and $x^*(z)< x^*(x)$, this implies that $x$ is not $\textnormal{Aff}(X)$-between $y, z$ which is a contradiction.
\end{proof}
The following proposition is easy to establish.
\begin{proposition} \label{pr2} %Let $X$ be a nonempty convex compact metrizable subset of some Hausdorff locally convex topological vector space $S$ and let $K$ be any nonempty closed subset of $X$ (not necessarily convex). Then
The following assertions hold.

$(1)$ Let $f: X \longrightarrow \R$ be a convex function, then the restriction of $f$ to $K$, $f_{|K}: K \longrightarrow \R$ is $\textnormal{Aff}(X)$-convex. The set of all upper semi-continuous (resp. of all continuous) $\textnormal{Aff}(X)$-convex functions from $K$ into $\R$ is $\textnormal{Aff}(X)$-stable.

$(2)$ Let $f: X \longrightarrow \R$ be a strictly quasi-convex function, then the restriction of $f$ to $K$, $f_{|K}: K \longrightarrow \R$ is $\textnormal{Aff}(X)$-convex. But the space of all strictly quasi-convex functions from $X$ into $\R$ is not $\textnormal{Aff}(X)$-stable.

\end{proposition}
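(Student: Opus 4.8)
The plan is to reduce both parts to the concrete description of betweenness furnished by Proposition \ref{equiv}: for $x,y,a\in K$ the point $a$ is $\textnormal{Aff}(X)$-between $x$ and $y$ precisely when $a\in[x,y]$, say $a=\lambda x+(1-\lambda)y$ with $\lambda\in[0,1]$, the only nondegenerate case being $\lambda\in\,]0,1[$ with $x\neq y$ (the endpoint cases being degenerate). To prove that the restriction of a convex $f$ is $\textnormal{Aff}(X)$-convex, I would assume $f(x)\leq f(a)$ and $f(y)\leq f(a)$ and insert them into the convexity inequality $f(a)\leq\lambda f(x)+(1-\lambda)f(y)$; together they force $f(a)=\lambda f(x)+(1-\lambda)f(y)$, and since $\lambda\in\,]0,1[$ this yields $f(x)=f(a)=f(y)$, which is exactly the implication defining $\textnormal{Aff}(X)$-convexity.

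For the strictly quasi-convex case in (2) the argument is shorter and proceeds by showing that the hypothesis of that implication can never be met at a genuine betweenness point. If $a=\lambda x+(1-\lambda)y$ with $\lambda\in\,]0,1[$ and $x\neq y$, strict quasi-convexity gives $f(a)<\max(f(x),f(y))$, so at least one of $f(x)\leq f(a)$, $f(y)\leq f(a)$ must fail; the defining implication is then vacuously satisfied and $f_{|K}$ is $\textnormal{Aff}(X)$-convex.

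The step I expect to require the most care is the $\textnormal{Aff}(X)$-stability asserted in (1). Upper semicontinuity (resp. continuity) of $f+\varphi$ is immediate from that of $f$ and the continuity of $\varphi\in\textnormal{Aff}(X)$, so the real content is that $f+\varphi$ is again $\textnormal{Aff}(X)$-convex. Manipulating the betweenness inequalities for $f+\varphi$ directly is awkward, since $(f+\varphi)(x)\leq(f+\varphi)(a)$ does not translate into $f(x)\leq f(a)$; the clean route is instead to use that every $\varphi\in\textnormal{Aff}(X)$ is in particular convex, so that when $f$ is (the restriction of) a convex function $f+\varphi$ is convex as well and the first part of (1) applies. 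I would stress that it is genuinely convexity, and not the weaker $\textnormal{Aff}(X)$-convexity, that is preserved here: for instance $t\mapsto\sqrt{t}$ on $[0,1]$ is $\textnormal{Aff}(X)$-convex whereas $t\mapsto\sqrt{t}-t$ is not, so the stability has to be read for the convex functions providing these examples.

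Finally, the failure of stability in (2) I would settle with one explicit example on $X=[0,1]$: the identity $f(t)=t$ is strictly quasi-convex (being strictly monotone), $\varphi(t)=-t$ is affine, and their sum is the constant $0$, which is not strictly quasi-convex because at any interior point $t=\lambda\cdot 0+(1-\lambda)\cdot 1$ one has $(f+\varphi)(t)=0=\max\big((f+\varphi)(0),(f+\varphi)(1)\big)$ instead of a strict inequality. This shows that the space of strictly quasi-convex functions is not $\textnormal{Aff}(X)$-stable and completes the plan.
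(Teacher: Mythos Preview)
The paper supplies no proof beyond the remark that the proposition is ``easy to establish,'' so there is nothing to compare against directly. Your arguments are sound: the reduction via Proposition~\ref{equiv} together with the convexity (resp.\ strict quasi-convexity) inequality handles the first sentences of (1) and (2), and your example $f(t)=t$, $\varphi(t)=-t$ on $X=[0,1]$ settles the non-stability claim in~(2).

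More to the point, you have caught a genuine slip in the statement itself. Your observation that $t\mapsto\sqrt{t}$ on $[0,1]$ is $\textnormal{Aff}(X)$-convex (the hypothesis of the defining implication is vacuous for any strictly monotone function) while $t\mapsto\sqrt{t}-t$ is not (take $x=0$, $y=1$, $a=\tfrac14$) is correct, and it shows that the class of all upper semicontinuous $\textnormal{Aff}(X)$-convex functions on $K$ is \emph{not} $\textnormal{Aff}(X)$-stable, contrary to the second sentence of~(1) as literally written. Your proposed reading---stability for the class of restrictions to $K$ of convex functions on $X$, which is trivially closed under addition of affine functions---is the correct repair and is exactly how the proposition is invoked after Corollary~\ref{Bauer}.
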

%\begin{proposition} \label{ext} Let $S$ be any nonempty set, $\Phi$ a familly of real-valued functions defined on $S$. Let $f$ be a real-valued $\Phi$-convex function on $S$. Suppose that $f$ has a unique maximum at some point $x_0$ of $S$. Then, $x_0$ is 
%$\Phi$-extremal in $S$.
%\end{proposition}
%\begin{proof} We have $f(x_0) > f(x)$ for all $x\in S\setminus\lbrace x_0\rbrace$. Suppose that $x_0$ is $\Phi$-between $x,y\in S$. Since, $f(x), f(y)\leq f(x_0)$ and $f$ is $\Phi$-convex, then $f(x)=f(x_0)=f(y)$, which implies that $x=y=x_0$ since $x_0$ is the unique maximum for $f$. Thus, $x_0$ is $\Phi$-extremal in $S$.  
%\end{proof}

\subsection{The main result.}

By $(C(K),\|.\|_{\infty})$ we denote the Banach space of all real-valued continuous functions defined on a compact set $K$ and equipped with the sup-norm.
\vskip5mm

For reasons of completeness, we recall below the following simplified form of variational principle from \cite{Ba} that we will use.
\begin{lemma} \label{MBAD} \textnormal{(See \cite[Lemma 3.]{Ba})} Let $(K,d)$ be a compact metric space and $(\Phi,\|.\|_\Phi)$ be a Banach space included in $C(K)$ which separates the points of $K$ and such that $\alpha\|.\|_\Phi\geq \|.\|_{\infty}$ for some real number $\alpha> 0$. Let $f : (K,d) \rightarrow \R\cup \left\{+\infty \right\}$ be a proper lower semi-continuous function. Then, the set 
$$N(f)=\left\{\varphi  \in \Phi : f-\varphi \textnormal { does not have a unique minimum on } K \right\}$$
is of the first Baire category in $Y$. 
\end{lemma}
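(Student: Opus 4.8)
The plan is to derive this Baire-category statement as a corollary of a \emph{generic well-posedness} principle for the optimization problem of minimizing $f-\varphi$. The target set $N(f)$ is the complement of the set
$$
G(f)=\left\{\varphi\in\Phi : f-\varphi \textnormal{ has a unique minimizer on } K\right\},
$$
so it suffices to show that $G(f)$ is residual, i.e. contains a dense $G_\delta$. The natural quantity to track is the oscillation of approximate minimizers. For $\varepsilon>0$ define the diameter of the $\varepsilon$-minimizing set
$$
\diam_\varepsilon(\varphi):=\textnormal{diam}\left\{x\in K : (f-\varphi)(x)\leq \inf_K(f-\varphi)+\varepsilon\right\},
$$
and set
$$
U_n:=\left\{\varphi\in\Phi : \exists\,\varepsilon>0 \textnormal{ with } \diam_\varepsilon(\varphi)<\tfrac{1}{n}\right\}.
$$
Because $K$ is a \emph{compact} metric space and $f-\varphi$ is lower semi-continuous, the infimum is attained and the approximate-minimizer sets are nonempty closed sets that shrink as $\varepsilon\downarrow 0$; one checks that $\bigcap_n U_n$ is exactly $G(f)$, so proving each $U_n$ is open and dense gives the result.

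First I would establish that each $U_n$ is \emph{open} in $(\Phi,\|\cdot\|_\Phi)$. This is a stability argument: if $\varphi\in U_n$ witnessed by some $\varepsilon$, then perturbing $\varphi$ by a small $\psi$ in $\Phi$-norm changes $f-\varphi$ uniformly by at most $\alpha^{-1}\|\psi\|_\Phi$ (using the hypothesis $\alpha\|\cdot\|_\Phi\geq\|\cdot\|_\infty$), hence moves both the infimum and the level sets only slightly; a sufficiently small perturbation keeps the $(\varepsilon/2)$-minimizing set of $f-\varphi-\psi$ inside the $\varepsilon$-minimizing set of $f-\varphi$, preserving the diameter bound $1/n$. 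The key point is that uniform closeness of the functions forces nesting of approximate-minimizer sets, and this is where the norm-domination hypothesis is used essentially.

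The harder step is \emph{density} of $U_n$. Here I would use the fact that $\Phi\subset C(K)$ separates the points of $K$, which is the structural input replacing the bump function in the classical Deville--Godefroy--Zizler scheme. Given any $\varphi_0\in\Phi$ and $\delta>0$, I want to find $\psi\in\Phi$ with $\|\psi\|_\Phi<\delta$ such that $f-(\varphi_0+\psi)$ has an arbitrarily small approximate-minimizer set. Pick any point $x_0$ in the (nonempty, by compactness) minimizing set of $f-\varphi_0$; since $\Phi$ separates points, one can build a small-norm element of $\Phi$ that is strictly maximized at $x_0$ relative to the rest of $K$ on a neighborhood, thereby tilting the objective so that minimizers of $f-(\varphi_0+\psi)$ concentrate near $x_0$. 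The main obstacle is making this concentration \emph{quantitative and uniform away from $x_0$} while keeping $\|\psi\|_\Phi$ small: on the part of $K$ bounded away from $x_0$ one must ensure the perturbed objective is strictly larger than its value near $x_0$, which requires combining the separation property with the compactness of the complement of a neighborhood of $x_0$ to get a uniform gap. Since this is precisely the content of the cited variational principle, I would either invoke \cite[Lemma 3.]{Ba} directly or reproduce this point-separation-plus-compactness argument, rather than grind through the explicit construction of $\psi$.

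Finally, assembling the pieces: each $U_n$ is open and dense, so $\bigcap_n U_n=G(f)$ is a dense $G_\delta$ by the Baire category theorem (valid since $(\Phi,\|\cdot\|_\Phi)$ is a Banach space, hence a Baire space), and therefore its complement $N(f)$ is of the first Baire category, as claimed. I would remark that the properness and lower semi-continuity of $f$ enter only through guaranteeing that the infimum of $f-\varphi$ is finite and attained for every $\varphi\in\Phi$, which is automatic on the compact metric space $K$.
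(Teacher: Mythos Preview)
The paper does not supply its own proof of this lemma: it is merely recalled, with attribution to \cite[Lemma~3.]{Ba}, and then used as a black box. So there is no ``paper's proof'' to compare against; the relevant question is whether your sketch stands on its own.

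Your decomposition into the sets $U_n$ and the openness argument are correct and standard; the norm-domination hypothesis is used exactly where you say. The problem is the density step. You write that, at the crucial point, you would ``either invoke \cite[Lemma~3.]{Ba} directly or reproduce this point-separation-plus-compactness argument.'' The first option is circular: \cite[Lemma~3.]{Ba} \emph{is} the statement you are asked to prove. The second option is the entire nontrivial content of the lemma, and you have not carried it out. Concretely, the hypothesis that $\Phi$ separates points only gives, for each pair $x\neq y$, some $\varphi\in\Phi$ with $\varphi(x)\neq\varphi(y)$; it does not immediately yield a single small-norm $\psi\in\Phi$ that is ``strictly maximized at $x_0$'' with a uniform gap on $\{x\in K:d(x,x_0)\geq 1/n\}$, which is what your tilting argument needs. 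Passing from pointwise separation to such a $\psi$ is exactly where the work lies (one route exploits second countability of the compact metric space $K$ to extract a countable separating family in $\Phi$ and then runs an iterative or series construction to build the perturbation), and without it the density of $U_n$ is asserted rather than proved.
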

%%%%%%%%%%%%%%%%%%%%%%
%Suppose that there exists $a, b\in C$ and $\lambda\in (0,1)$ such that $e=\lambda a +(1-\lambda)b$. Since $a\neq e$ and $b\neq e$, it follows that 
%\begin{eqnarray*}
%\varphi(e)=\lambda \varphi(a)+(1-\lambda) \varphi(b) <\lambda \varphi(e)+(1-\lambda) \varphi(e)=\varphi(e),
%\end{eqnarray*}
%%%%%%%%%%%%%%%%%%%%%%%%

We give now our main result. If $K$ is compact and $f\in\Phi\mathcal{C}(K)$, by $K_{\max}(f)$, we denote the following closed subset of $K$:
$$\emptyset\neq K_{\max}(f):=\lbrace x\in K: f(x)=\max_K f\rbrace\subset K.$$
\begin{theorem} \label{Ba}  Let $(K,d)$ be a compact metric space and $(\Phi,\|.\|_\Phi)$ be a Banach space included in $C(K)$ which separates the points of $K$ and such that $\alpha\|.\|_\Phi \geq \|.\|_{\infty}$ for some real number $\alpha> 0$. Then, the following assertions hold.

$(1)$ Let $C\neq \emptyset$ be any closed subset of $K$, then $\emptyset\neq \Phi\textnormal{Exp}(C)\subset \Phi\textnormal{Ext}(C)$.

$(2)$ Let $I$ be any nonempty set and let $f_i \in \Phi\mathcal{C}(K)$ for all $i\in I$. Suppose that $\cap_{i\in I} K_{\max}(f_i)\neq \emptyset$. Then,
$$\Phi\textnormal{Exp}(\cap_{i\in I} K_{\max}(f_i))\cap \Phi\textnormal{Ext}(K)=\emptyset.$$
% $$ \emptyset\neq \Phi\textnormal{Exp}(\cap_{i\in I} K_{\max}(f_i))\subset \Phi\textnormal{Ext}(K).$$ 
In particular, there exists a commun $\Phi$-extremal point $e$ of $K$ such that $f_i$ attains its maximum at $e$ for all $i\in I$. 

$(3)$ Let $\mathcal{B}$ be any $\Phi$-stable subspace of $(\Phi\mathcal{C}(K),\rho_{\infty})$. Then, generically, a function from $\mathcal{B}$ attains its maximum at a unique $\Phi$-extremal point of $K$. More precisely, the set 
$$\mathcal{G}:=\left\{f  \in \mathcal{B} : K_{\max}(f)\cap \Phi\textnormal{Ext}(K) \textnormal{ is a singleton } \right\}$$
is a $G_\delta$ dense subset of $(\mathcal{B},\rho_{\infty})$.
\end{theorem}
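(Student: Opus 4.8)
The plan is to prove the three assertions in a logical order, where (1) is the base case and the key input coming from the variational principle (Lemma \ref{MBAD}), and where (2) and (3) both exploit (1) applied to the right closed set. Throughout, I note that $-\Phi$ is a Banach space of continuous functions satisfying the same hypotheses as $\Phi$ (separating points, $\alpha\|\cdot\|_\Phi\ge\|\cdot\|_\infty$), so one can freely apply Lemma \ref{MBAD} with $\Phi$ in the role of the perturbing space, translating between ``unique minimum of $f-\varphi$'' and ``unique maximum of $\varphi-f$'' or ``strict maximum of $\varphi$'' by choosing $f$ appropriately.

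For assertion (1), let $C\neq\emptyset$ be closed (hence compact) in $K$. The goal is to produce a single $\varphi\in\Phi$ with a strict maximum on $C$, which gives a $\Phi$-exposed point. I would apply Lemma \ref{MBAD} with the function $f\equiv 0$ (proper lower semi-continuous on the compact set $C$, viewing $C$ as a compact metric space and $\Phi|_C$ as the separating Banach space); since $N(0)$ is of first category in $\Phi$, it is in particular a proper subset, so there exists $\varphi\in\Phi$ such that $-\varphi$ has a unique minimum on $C$, i.e. $\varphi$ has a strict maximum on $C$ at some point $x$. By Definition \ref{Def3}, $x\in\Phi\textnormal{Exp}(C)$, so $\Phi\textnormal{Exp}(C)\neq\emptyset$. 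The inclusion $\Phi\textnormal{Exp}(C)\subset\Phi\textnormal{Ext}(C)$ is already recorded in the text after Definition \ref{Def3}, so (1) is complete.

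For assertion (2), I suspect the displayed equation is a typo and should read $\Phi\textnormal{Exp}(\cap_{i\in I}K_{\max}(f_i))\cap\Phi\textnormal{Ext}(K)\neq\emptyset$. Set $M:=\cap_{i\in I}K_{\max}(f_i)$, which is nonempty by hypothesis and closed (each $K_{\max}(f_i)$ is closed since the $f_i$ are upper semi-continuous), hence compact. By (1) applied to $C=M$, there exists $e\in\Phi\textnormal{Exp}(M)$; let $\varphi\in\Phi$ strictly expose $M$ at $e$. The main step is to show $e\in\Phi\textnormal{Ext}(K)$: suppose $e$ is $\Phi$-between points $x,y\in K$; I must deduce $x=y=e$ (or at least that this contradicts the strict exposedness unless $x,y\in M$). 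The idea is that each $f_i$ is $\Phi$-convex and attains its max over $K$ on $M\supset\{e\}$, so $f_i(x),f_i(y)\le f_i(e)=\max_K f_i$; the $\Phi$-convexity of $f_i$ together with $e$ being $\Phi$-between $x,y$ forces $f_i(x)=f_i(e)=f_i(y)$, whence $x,y\in K_{\max}(f_i)$ for every $i$, i.e. $x,y\in M$. But $\varphi$ strictly exposes $M$ at $e$, so if $x,y\in M$ and $e$ is $\Phi$-between $x,y$ with $\varphi(x),\varphi(y)\le\varphi(e)$ (the latter holding since $e$ maximizes $\varphi$ on $M$), the definition of $\Phi$-between gives $\varphi(x)=\varphi(e)=\varphi(y)$, forcing $x=y=e$ by strictness. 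This establishes $e\in\Phi\textnormal{Ext}(K)$ and simultaneously $e\in M$, giving the common $\Phi$-extremal maximizer.

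For assertion (3), the strategy is a standard Baire-category argument. For each positive integer $n$, define $\mathcal{G}_n:=\{f\in\mathcal{B}:\textnormal{diam}(K_{\max}(f))<1/n\}$ (diameter in the metric $d$), and I would show each $\mathcal{G}_n$ is open and dense in $(\mathcal{B},\rho_\infty)$, so that $\cap_n\mathcal{G}_n$ is a dense $G_\delta$; then argue that a function in this intersection has $K_{\max}(f)$ a single point $e$, and that $e\in\Phi\textnormal{Ext}(K)$ by (2) with $I$ a singleton. \emph{Openness} follows from upper semi-continuity and compactness: a uniform perturbation smaller than a suitable threshold cannot enlarge the maximizer set beyond a given diameter. \emph{Density} is where Lemma \ref{MBAD} and $\Phi$-stability enter: given $f\in\mathcal{B}$ and $\varepsilon>0$, apply Lemma \ref{MBAD} to the proper lower semi-continuous function $-f$ on $K$ to find $\varphi\in\Phi$ of arbitrarily small norm with $-f-\varphi$ having a unique minimum, i.e. $f+\varphi$ attaining a unique maximum; by $\Phi$-stability $f+\varphi\in\mathcal{B}$, and smallness of $\|\varphi\|_\Phi$ (hence of $\|\varphi\|_\infty$, via $\alpha\|\cdot\|_\Phi\ge\|\cdot\|_\infty$, translated through the $\rho_\infty$ estimate recorded in the text) keeps $f+\varphi$ within $\varepsilon$ of $f$. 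The main obstacle, and the step deserving the most care, is the density argument: one must verify that the unique maximizer produced by the variational principle actually lies in $\Phi\textnormal{Ext}(K)$ so that the approximant lands in $\mathcal{G}$ rather than merely in some $\mathcal{G}_n$ — this is handled by invoking (2) for the single function $f+\varphi$, whose maximizer set over $K$ is the singleton $\{e\}=M$, forcing $e\in\Phi\textnormal{Ext}(K)$. Finally, a point of $\cap_n\mathcal{G}_n$ has $K_{\max}(f)$ of zero diameter, hence a singleton, and that singleton is extremal by the same application of (2), completing the proof.
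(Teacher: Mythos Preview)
Your proof is correct and follows essentially the same strategy as the paper: part~(1) via Lemma~\ref{MBAD} (the paper applies it on $K$ to the indicator $\delta_C$ rather than on $C$ to $f\equiv 0$, but the effect is identical), part~(2) by using $\Phi$-convexity of the $f_i$ to force $x,y\in M$ and then invoking extremality/exposedness in $M$, and part~(3) by a Baire argument with open sets whose intersection is precisely the set of functions with a unique maximizer, together with~(2). You are also right that the displayed formula in~(2) is a typo and should read $\neq\emptyset$; the paper's own proof establishes the nonempty inclusion $\Phi\textnormal{Exp}(\cap_i K_{\max}(f_i))\subset\Phi\textnormal{Ext}(\cap_i K_{\max}(f_i))\subset\Phi\textnormal{Ext}(K)$.
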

\begin{proof} $(1)$ Let $\delta_C : K\longrightarrow \R\cup\lbrace +\infty\rbrace$ be the function such that $\delta_C =0$ on $C$ and $\delta_C =+\infty$ on $K\setminus C$. This function is lower semi-continuous on $K$. From Lemma \ref{MBAD}, there exists a function $\varphi\in \Phi$ such that $\delta_C -\varphi$ has a unique minimum at some point $e\in K$. Equivalently, $\varphi$ has a unique maximum on $C$ attained at $e$, which implies that $\Phi\textnormal{Exp}(C)\neq \emptyset$. Now, it is easy to see that $\Phi\textnormal{Exp}(C)\subset \Phi\textnormal{Ext}(C)$. Indeed, let $e\in \Phi\textnormal{Exp}(C)$, then there exists $\varphi\in \Phi$ such that $\varphi(e)>\varphi(x)$ for all $x\in C\setminus \lbrace e \rbrace$. Let $y, z\in C$ such that $e$ is $\Phi$-between $y, z$. Since $\varphi(y), \varphi(z)\leq \varphi(e)$ and $e$ is $\Phi$-between $y, z$, it follows that $\varphi(y)=\varphi(e)=\varphi(z)$ which implies that $y=e=z$, since $e$ is the unique maximum of $\varphi$. Thus, $e$ is an $\Phi$-extremal point of $C$.
\vskip5mm
\noindent $(2)$ Since $f_i$ is upper semi-continuous then, the set $K_{\max}(f_i)$ is a compact subset of $K$. By hypothesis, $\cap_{i\in I} K_{\max}(f_i)\neq \emptyset$. Thus, $\cap_{i\in I} K_{\max}(f_i)$ is a nonempty compact subset of $K$. By part $(1)$, $$\emptyset\neq \Phi\textnormal{Exp}(\cap_{i\in I} K_{\max}(f_i))\subset \Phi\textnormal{Ext}(\cap_{i\in I} K_{\max}(f_i)).$$ Let $e\in \Phi\textnormal{Ext}(\cap_{i\in I} K_{\max}(f_i))$ and let us show that $e\in \Phi\textnormal{Ext}(K)$. Indeed, suppose that the contrary hold, that is there exists $y,z \in K$ such that $e$ is $\Phi$-between $y, z$ and $y\neq e$ or $z\neq e$. Since $e\in \Phi\textnormal{Ext}(\cap_{i\in I} K_{\max}(f_i))$ and $e$ is $\Phi$-between $y, z$, then either $y\in K\setminus \cap_{i\in I} K_{\max}(f_i)$ or $z\in K\setminus \cap_{i\in I} K_{\max}(f_i)$. We can assume without losing generality that is $y\in K\setminus \cap_{i\in I} K_{\max}(f_i)$. Thus, there exists $i_0\in I$ such that $y \not\in K_{\max}(f_{i_0})$. It follows that $e$ is $\Phi$-between $y, z$; $f_{i_0}(y)< \max_K f_{i_0} =f_{i_0}(e)$ and $f_{i_0}(z)\leq \max_K f_{i_0} =f_{i_0}(e)$. This contradicts the fact that $f_{i_0}$ is $\Phi$-convex. Thus, we have $$ \emptyset\neq \Phi\textnormal{Exp}(\cap_{i\in I} K_{\max}(f_i))\subset \Phi\textnormal{Ext}(\cap_{i\in I} K_{\max}(f_i))\subset \Phi\textnormal{Ext}(K).$$ 
%which implies in particular that $\cap_{i\in I} K_{\max}(f_i)\cap \Phi\textnormal{Ext}(K)=\emptyset.$
\vskip5mm
\noindent $(3)$ For each $n\in \N^*$, let 
$$O_n:=\lbrace f\in \mathcal{B}; \exists x_n\in K/ f(x_n) > \sup\lbrace f(x): d(x,x_n)\geq \frac 1 n\rbrace\rbrace.$$
It is easy to see that $O_n$ is an open subset of $(\mathcal{B},\rho_{\infty})$ for each $n\in \N$. Thanks to Lemma \ref{MBAD}, for every $0<\varepsilon <1$ and every $f\in \mathcal{B}$, there exists a function $\varphi\in \Phi$ such that $\rho_{\infty}(\varphi,0)< \varepsilon$ and $-f -\varphi$ has a unique minimum on $K$ at some point $x_0$. This implies that $g:=f+ \varphi \in \cap_{n \in \N} O_n$ (we take $x_n=x_0$ for all $n\in \N$) and $\rho_{\infty}(g,f)< \varepsilon$. Thus $\cap_{n \in \N} O_n$ is dense in $(\mathcal{B},\rho_{\infty})$. It follows that $\cap_{n\in \N} O_n$ is a $G_\delta$ dense subset of $(\mathcal{B},\rho_{\infty})$. By following the idea of the proof of the variational principle of Devile-Godefroy-Zizler in \cite{DGZ}, we see that
$$\cap_{n\in \N} O_n = \left\{f  \in \mathcal{B} : f \textnormal{ has a unique maximum on } K \right\}.$$
Using part $(2)$, a unique maximum for a function from $\mathcal{B}$, is necessarily an extremal point. Hence, 
$$\cap_{n\in \N} O_n = \left\{f  \in \mathcal{B} : K_{\max}(f)\cap \Phi\textnormal{Ext}(K) \textnormal{ is a singleton } \right\}.$$
\end{proof}
\begin{corollary} Under the hypothesis of Theorem \ref{Ba}, for each $x\in K$, let $$\Omega_x:=\lbrace f\in \Phi\mathcal{C}(K): f(x)=\max_X f\rbrace.$$ Then, $\Omega_x$ is a nonempty closed cone subset of $(\Phi\mathcal{C}(K),\rho_{\infty})$ and there exists $e\in \Phi\textnormal{Ext}(K)$ such that $\Omega_x\subset\Omega_e$.
\end{corollary}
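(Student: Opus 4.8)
The plan is to dispatch the three structural properties of $\Omega_x$ (nonemptiness, closedness, and the cone property) by elementary arguments, and then to obtain the inclusion $\Omega_x\subset\Omega_e$ as a direct application of part $(2)$ of Theorem \ref{Ba}, the trick being to take the indexing family to be $\Omega_x$ itself. Throughout I read $\max_X f$ in the definition of $\Omega_x$ as $\max_K f$, consistent with the ambient hypotheses of Theorem \ref{Ba}.

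For the structural properties I would proceed as follows. Nonemptiness: any constant function is continuous and vacuously $\Phi$-convex (for a constant $f$ one always has $f(u)=f(a)=f(v)$, so the implication in the definition of $\Phi$-convexity is automatic), and it attains its maximum at every point of $K$, in particular at $x$; hence $\Omega_x\neq\emptyset$. Closedness: using that $\rho_\infty$-convergence coincides with uniform convergence on $K$ (precisely the equivalence $\rho_\infty(f,g)\le\varepsilon\Leftrightarrow\sup_K|f-g|\le\varepsilon/(1-\varepsilon)$ recorded before Theorem \ref{Ba}), if $f_n\in\Omega_x$ and $f_n\to f$ in $\Phi\mathcal{C}(K)$, then letting $n\to\infty$ in $f_n(x)\ge f_n(y)$ (valid for every $y\in K$) gives $f(x)\ge f(y)$ for all $y$, so $f(x)=\max_K f$ and $f\in\Omega_x$. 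Cone property: for $f\in\Omega_x$ and $\lambda\ge 0$, multiplication by $\lambda$ preserves upper semicontinuity, preserves $\Phi$-convexity (the defining inequalities are unchanged on division by $\lambda>0$, and for $\lambda=0$ the function is constant hence $\Phi$-convex), and preserves the maximum condition since $(\lambda f)(x)=\lambda\max_K f=\max_K(\lambda f)$; thus $\lambda f\in\Omega_x$.

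The decisive step is the inclusion. I would invoke Theorem \ref{Ba}$(2)$ with $I=\Omega_x$ and $f_i=f$ for each $f\in\Omega_x$. The hypothesis $\cap_{f\in\Omega_x}K_{\max}(f)\neq\emptyset$ is automatic: by the very definition of $\Omega_x$, the point $x$ lies in $K_{\max}(f)$ for every $f\in\Omega_x$, so $x$ belongs to the intersection. The conclusion of Theorem \ref{Ba}$(2)$ (its ``in particular'' clause) then produces a common $\Phi$-extremal point $e\in\Phi\textnormal{Ext}(K)$ at which every $f\in\Omega_x$ attains its maximum over $K$; that is, $f(e)=\max_K f$, which is exactly the statement $f\in\Omega_e$. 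Hence $\Omega_x\subset\Omega_e$, with $e\in\Phi\textnormal{Ext}(K)$ as required.

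I expect the only genuinely delicate point to be the realization that the index set $I$ in Theorem \ref{Ba}$(2)$ may be taken arbitrary — here the possibly uncountable family $\Omega_x$ — and that its nonemptiness hypothesis is furnished \emph{for free} by the common maximizer $x$; once this is seen, the remaining verifications are routine unwindings of the definitions of $\Phi$-convexity and of $\rho_\infty$.
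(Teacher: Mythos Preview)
Your proof is correct and follows essentially the same route as the paper's: the structural properties of $\Omega_x$ are dismissed there as ``easy to see'' (you simply supply the details), and the key inclusion $\Omega_x\subset\Omega_e$ is obtained exactly as you do, by noting $x\in\bigcap_{f\in\Omega_x}K_{\max}(f)$ and invoking Theorem~\ref{Ba}$(2)$ with $I=\Omega_x$.
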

\begin{proof} It is easy to see that $\Omega_x$ is nonempty closed cone subset of $(\Phi\mathcal{C}(K),\rho_{\infty})$. From the definition of $\Omega_x$, we have that $x\in \cap_{f\in \Omega_x} K_{\max}(f)\neq\emptyset$. It follows from Theorem \ref{Ba} that $\cap_{f\in \Omega_x} K_{\max}(f)\cap \Phi\textnormal{Ext}(K)\neq \emptyset$ which gives the proof.

\end{proof}
\vskip5mm
Let $X$ be a nonempty convex compact subset of some Hausdorff locally convex topological vector space $S$. By considering the classe $(\Phi, \|.\|)=(\textnormal{Aff}(X),\|.\|_{\infty})$ and a compact subset $K$ of $X$ (not necessarily convex), we obtain from Theorem \ref{Ba} (using Proposition \ref{equiv} and Proposition \ref{pr2}) the following extension of the classical Bauer's maximum principle for compact metrizable sets. 
\begin{corollary} \label{Bauer} Let $X$ be a nonempty convex compact metrizable subset of some Hausdorff locally convex topological vector space $S$. Let $K$ be a any nonempty closed subset of $X$ (not necessarily convex). Then, the following assertions hold.

\noindent $(1)$ $\emptyset\neq \textnormal{AExp}(K)\subset\textnormal{Ext}(K)$.

\noindent $(2)$ Let $I$ be a nonempty set and $f_i: K\longrightarrow \R$ be a upper semi-continuous $\textnormal{Aff}(X)$-convex function, for all $i\in I$. Suppose that $\cap_{i\in I} K_{\max}(f_i)\neq \emptyset$, then $$\textnormal{AExp}(\cap_{i\in I} K_{\max}(f_i))\cap\textnormal{Ext}(K)\neq \emptyset.$$
In particular, there exists a commun extremal point $e$ at which $f_i$ attains its maximum on $K$, for each $i\in I$. 

$(3)$ Let $\mathcal{B}$ be a subset of all $\textnormal{Aff}(X)$-convex functions which is $\textnormal{Aff}(X)$-stable. Then, the set 
$$\mathcal{G}:=\left\{ f\in \mathcal{B}: K_{\max}(f)\cap \textnormal{Ext}(K)\textnormal{ is a singleton } \right\}$$
is a $G_\delta$ dense subset in $(\mathcal{B},\rho_{\infty})$. 
\end{corollary}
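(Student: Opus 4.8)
The plan is to deduce all three assertions directly from Theorem \ref{Ba}, specialized to the family $\Phi=\textnormal{Aff}(X)$ (more precisely its restriction to $K$) with the supremum norm, and then to translate the $\Phi$-theoretic conclusions into the classical vocabulary using Propositions \ref{equiv} and \ref{pr2}. Thus the entire content reduces to checking that the abstract hypotheses are met for this particular $\Phi$ and that the two dictionaries match up; no new geometric input is needed.

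First I would verify the hypotheses of Theorem \ref{Ba}. Since $X$ is compact metrizable and $K$ is closed in $X$, the pair $(K,d)$ is a compact metric space for any metric $d$ inducing the topology of $X$. The restriction map $\varphi\mapsto \varphi|_K$ carries the Banach space $(\textnormal{Aff}(X),\|.\|_{\infty,X})$ into $C(K)$; I take $\Phi$ to be its image and norm it by the quotient norm modulo the closed subspace $N=\lbrace \varphi\in\textnormal{Aff}(X):\varphi|_K=0\rbrace$. This keeps $\Phi$ a Banach space included in $C(K)$, and since $n|_K=0$ for $n\in N$ one gets $\|\psi\|_\Phi=\inf_{n\in N}\|\varphi-n\|_{\infty,X}\geq \|\psi\|_{\infty,K}$, so the domination $\alpha\|.\|_\Phi\geq\|.\|_\infty$ holds with $\alpha=1$. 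Point separation on $K$ follows from the Hahn--Banach separation theorem: for distinct $x,y\in K\subset X$ there is $x^*\in S^*$ with $x^*(x)\neq x^*(y)$, and $x^*|_X\in\textnormal{Aff}(X)$.

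Next I would record the two identifications. By Definition \ref{Def3}, the $\Phi$-exposed points of a set for $\Phi=\textnormal{Aff}(X)$ are exactly its affine exposed points, so $\Phi\textnormal{Exp}(\cdot)=\textnormal{AExp}(\cdot)$; and by Proposition \ref{equiv} a point is $\textnormal{Aff}(X)$-extremal if and only if it is extremal in the classical sense, so $\Phi\textnormal{Ext}(\cdot)=\textnormal{Ext}(\cdot)$. Likewise, Proposition \ref{pr2}$(1)$ shows that an upper semi-continuous $\textnormal{Aff}(X)$-convex function on $K$ is precisely an element of $\Phi\mathcal{C}(K)$ and that $\textnormal{Aff}(X)$-stability of $\mathcal{B}$ coincides with $\Phi$-stability.

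With these identifications in hand, the three parts follow by transcription. Assertion $(1)$ is Theorem \ref{Ba}$(1)$ applied to the closed set $C=K$. Assertion $(2)$ is Theorem \ref{Ba}$(2)$ applied to the $f_i\in\Phi\mathcal{C}(K)$: its conclusion $\Phi\textnormal{Exp}(\cap_{i\in I} K_{\max}(f_i))\cap\Phi\textnormal{Ext}(K)\neq\emptyset$ rewrites as $\textnormal{AExp}(\cap_{i\in I} K_{\max}(f_i))\cap\textnormal{Ext}(K)\neq\emptyset$, and any such point $e$ lies in $\cap_{i\in I} K_{\max}(f_i)$, hence is a common extremal maximizer. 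Assertion $(3)$ is Theorem \ref{Ba}$(3)$ for the $\Phi$-stable set $\mathcal{B}$, with $\Phi\textnormal{Ext}(K)$ replaced by $\textnormal{Ext}(K)$. I expect the only genuinely delicate step to be confirming that the restricted affine functions form a Banach space inside $C(K)$ meeting the norm-domination and separation requirements; once the possible non-injectivity of the restriction is absorbed by passing to the quotient, the remainder is a direct translation of the abstract statement.
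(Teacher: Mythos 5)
Your proposal is correct and follows exactly the paper's route: the paper derives the corollary by applying Theorem \ref{Ba} with $(\Phi,\|.\|_\Phi)=(\textnormal{Aff}(X),\|.\|_{\infty})$ on the compact set $K$, using Proposition \ref{equiv} to identify $\textnormal{Aff}(X)$-extremal points with classical extremal points and Proposition \ref{pr2} for $\textnormal{Aff}(X)$-convexity and stability. Your additional care in passing to the quotient norm so that the restrictions $\textnormal{Aff}(X)|_K$ form a Banach space inside $C(K)$ with $\|.\|_\Phi\geq\|.\|_{\infty,K}$ is a legitimate technical point that the paper leaves implicit, but it does not change the argument.
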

We can take for example, in the above corollary, $\mathcal{B}$ equal to the space of restrictions to $K$ of all upper semi-continuous (resp. of all continuous) convex functions from $X$ into $\R$. We can also take $K=X$ (convex in this case) and $\mathcal{B}$ equal to the space of all upper semi-continuous (resp. of all continuous) convex functions from $X$ into $\R$.
%%%%%%%%%%%%%%%%%%%%%%%%%%%%%%%%%%%%%%%%%%
%%%%%%%%%%%%%%%%%%%%%%%%%%%%%%%%%%%%%%%%%%%
%%%%%%%%%%%%%%%%%%%%%%%%%%%%%%%%%%%%%%%%%%%
\vskip5mm
\paragraph{\bf Some classes of $\Phi$-convexity.} The result of this note applies in particular for the following ineresting classes of functions (these spaces do not satisfy the hypothesis of the Deville-Godefroy-Zizler variational principle \cite{DGZ}, \cite{DR}): 

$(1)$ $(\Phi,\|.\|_\Phi)=(\mathcal{H}(\overline{\Omega}),\|.\|_{\infty})$, where $\Omega$ is a bounded open connex subset of $\R^n$ and $\mathcal{H}(\overline{\Omega})$ is the space of all harmonique functions on $\Omega$ that are continuous on $\overline{\Omega}$, equipped with the sup-norm. 

$(2)$ $(\Phi,\|.\|_\Phi)=(\mathcal{P}_n^d(K),\|.\|_{\infty})$, where $K$ is a compact subset of $\R^n$ and $\mathcal{P}_n^d(K)$ is the set of all $n$-variable polynomial functions of degree less or equal to $d\geq 1$, equipped with the sup-norm.

$(3)$ $(\Phi,\|.\|_\Phi)=(Lip_0(K),\|.\|_{\textnormal{Lip}})$, where $(K,d)$ is a compact metric space and $Lip_0(K)$ is the space of all Lipschitz continuous functions that vanich at some point $x_0\in K$, equipped with the norm:
$$\|f\|_{\textnormal{Lip}}=\sup_{x,y\in K/x\neq y}\frac{|f(x)-f(y)|}{d(x,y)}.$$

$(4)$ $(\Phi,\|.\|_\Phi)=(\textnormal{Aff}(K),\|.\|_{\infty})$, where $K$ is convex compact metrizable set of some Hausdorff locally convex topological vector space and $\textnormal{Aff}(K)$ is the space of all real-valued affine continuous functions.

%%%%%%%%%%%%%%%%%%%%%%%%%%
%%%%%%%%%%%%%%%%%%%%%%%%%
%%%%%%%%%%%%%%%%%%%%%%%%%%%%
%%%%%%%%%%%%%%%%%%%%%%%%%%%%%%%
%%%%%%%%%%%%%%%%%%%%%%%%%%%%%%%%
\bibliographystyle{amsplain}

\end{document}